\newcommand{\cov}{\operatorname{Cov}}
\newcommand{\Id}{\operatorname{Id}}
\newtheorem{theo}{Theorem}[section]
\newtheorem{lem}{Lemma}[section]
\begin{document}

\begin{frontmatter}

\author{Julien Poisat}
\runauthor{Poisat}
\runtitle{Pinning with Markov disorder}

\title{Two Pinning Models with Markov disorder}

\maketitle

\begin{abstract}
Disordered pinning models deal with the (de)localization transition of a polymer in interaction with a heterogeneous interface. In this paper, we focus on two models where the inhomogeneities at the interface are not independent but given by an irreducible Markov chain on a finite state space. In the first model, using Markov renewal tools, we give an expression for the annealed critical curve in terms of a Perron-Frobenius eigenvalue, and provide examples where exact computations are possible. In the second model, the transition matrix vary with the size of the system so that, roughly speaking, disorder is more and more correlated. In this case we are able to give the limit of the averaged quenched free energy, therefore providing the full phase diagram picture, and the number of critical points is related to the number of states of the Markov chain. We also mention that the question of pinning in correlated disorder appears in the context of DNA denaturation.
\end{abstract}

\begin{keyword}
\kwd pinning models
\kwd statistical mechanics
\kwd polymers
\kwd disordered systems
\kwd annealed system
\kwd  Markov chains
\kwd Perron-Frobenius
\kwd renewal process
\kwd DNA denaturation
\end{keyword}

\end{frontmatter}

\section{Introduction}

Among statistical mechanics systems, disordered pinning models have received much attention in the last past years. This class of models deals with the localization/delocalization transition of a polymer interacting with an inhomogeneous interface, and whereas the homogeneous version is fully solvable, some questions on the disordered case were answered only recently. This paper deals with two pinning models with Markov disorder, and is organized as follows:  in a first part we will recall notations and general facts on the model, and give an overview of results dealing with the critical curve of the phase diagram (for a more detailed account on pinning models, the reader can refer to the monographs \cite{GG_Book}, \cite[Ch. 7,11]{DenH_Book} and the survey papers \cite{GG_Overview}, \cite{Toninelli_Survey}). We then introduce the two models, which will be developed in one section each.

\subsection{General model and general facts}

The model we use is the renewal pinning model, that we recall here. Suppose that the interface is modelled by the half line $[0,+\infty)$ and the points of contact between the polymer and the interface by a discrete renewal process $\tau = (\tau_n)_{n\geq0}$ where $\tau_0:=0$ and $K(n):= P(\tau_1 = n)=L(n) n^{-(1+\alpha)}$ ($L:\mathbb{N}\mapsto(0,+\infty)$ being a slowly varying function and $\alpha\geq0$) is the law of interarrival times. Without loss of generality, we assume $\tau$ is recurrent ($\sum_{n\geq1}K(n) =1$). We will denote by $\delta_n$ the indicator function of the event $\{n\in\tau\}:=\cup_{k\geq0} \{\tau_k = n\}$.

\par Let now $\omega = (\omega_n)_{n\geq0}$ be a sequence of centered random variables, independent of $\tau$, standing for the inhomogeneities on the interface. We will use the notation $\mathbb{E}$ (resp. $E$) for integration with respect to $\mathbb{P}$, the law of $\omega$ (resp. $P$, the law of $\tau$).

\par
For a typical realization of $\omega$, and parameters $h\in\mathbb{R}$, $\beta\geq0$, we define the hamiltonian at size $N$ as $$H_{N,\beta,h,\omega}(\tau) = \sum_{n=1}^N (\beta\omega_n + h)\delta_n$$ and the corresponding polymer law by $$ \frac{dP_{N,\beta,h,\omega}}{dP}(\tau) = \frac{1}{Z_{N,\beta,h,\omega}} \exp(H_{N,\beta,h,\omega}(\tau)) \delta_N$$ where $$Z_{N,\beta,h,\omega} = E(\exp(H_{N,\beta,h,\omega}(\tau))\delta_N) $$ is called the partition function. We call free energy at size $N$, the (random) function $$F_N(\beta,h) = \frac{1}{N}\log Z_{N,\beta,h,\omega}.$$

 If the sequence $\omega$ is stationary, ergodic and integrable, then one can prove by subadditive arguments that there exists a deterministic quantity $F(\beta,h)\geq0$ (the infinite size quenched free energy) such that for all parameters $(\beta,h)$, $\lim_{N\rightarrow+\infty}F_N(\beta,h) = F(\beta,h)$ in the $L^1(\omega)$ and $\omega$-almost surely (see \cite[Theorem 4.6]{GG_Book}). Then let us define the localized phase $\mathcal{L} = \{(\beta,h), F(\beta,h)>0\}$ and the delocalized phase $\mathcal{D} = \{(\beta,h), F(\beta,h)=0\}$. One can see why by looking at the partial derivative of $F$ with respect to $h$: using convexity arguments, we can write $\partial_hF(\beta,h) = \lim_{N\rightarrow +\infty} \partial_h F_N(\beta,h) = \lim_{N\rightarrow +\infty} E_{N,\beta,h}((1/N)\sum_{n=1}^N \delta_n)$ which is the limiting contact density under the polymer law. Still using the convexity of $F(\beta,h)$, one can prove the existence of a concave curve (called critical curve) $\beta \mapsto h_c(\beta)\in [-\infty,0]$ such that for all $\beta\geq0$, $$(\beta,h)\in\mathcal{L} \iff h>h_c(\beta).$$

\subsection{The critical curve: State of the art}

Let us look at the results available on the critical curve. We already know from the homogeneous pinning model that $h_c(0)=0$ (see \cite[Ch.2]{GG_Book}). Then one can prove that $h_c(\beta)\leq0$, and with a little more work, provided disorder is nondegenerate, that $h_c(\beta)<0$ if $\beta>0$: this means that disorder has a localizing effect (see \cite[Proposition 5.1 and Theorem 5.2]{GG_Book}).

Before giving a lower bound, we assume that $\omega = (\omega_n)_{n\geq0}$ is a sequence of \emph{independent and identically distributed} random variables, such that the moment generating function $\Lambda(\beta) := \mathbb{E}(e^{\beta \omega_0})$ is finite at least on an open interval $[0,c)$. This allows us to give the following lower bound on the critical curve: for all $\beta\in[0,c)$, $$h_c(\beta) \geq h_c^a(\beta) := -\log\Lambda(\beta).$$ This a direct consequence of the following observation: by Jensen's inequality,
\begin{equation}\label{annealed_bound}\mathbb{E}F_N(\beta,h)\leq F_N^a(\beta,h) := \frac{1}{N}\log \mathbb{E}Z_{N,\beta,h,\omega}\end{equation} (note that this inequality is general: we do not require any independence assumption at this point). The quantity $Z^a_{N,\beta,h}:=\mathbb{E}Z_{N,\beta,h,\omega}$ is called annealed partition function. If we now use the fact that the $\omega_n$'s are i.i.d., one gets $$Z^a_{N,\beta,h} = E\left(\exp\left((h + \log\Lambda(\beta)) \sum_{n=1}^N \delta_n\right)\delta_N\right)$$ so that in this case the annealed free energy is nothing but the homogeneous free energy ($\beta=0$) with parameter $h$ shifted by $\log\Lambda(\beta)$. Since the critical point of the homogeneous model is $0$, the critical point of the annealed model is $h_c^a(\beta)=-\log\Lambda(\beta)$.

The question of whether the annealed critical curve $h_c^a$ and the quenched critical curve $h_c$ coincide (as well as the related question of equality of critical exponents) has been the topic of many papers. Several authors (\cite{Alexander_Sido,Fabio_replica,Lacoin_martingale,Cheliotis_DenH}) proved consecutively, with different methods, that if $\alpha$ is in $(0,\frac{1}{2})$, then the equality $h_c(\beta) = h_c^a(\beta)$ holds for small enough $\beta$. The same is true for $\alpha=\frac{1}{2}$ provided $\sum_{n\geq1} \frac{1}{nL(n)^2} < +\infty$, and more recently the result has been proved for $\alpha=0$, the equality being true for all values of $\beta$ in this case (see \cite{Alexander_loop_exp_one}). If $\alpha > 1/2$ or if $\alpha = 1/2$ but under some conditions on $L$ ( for instance, if $L(\infty)=0$, or $L$ is a constant) ,  then disorder is relevant for all $\beta$, meaning that $h_c(\beta)>h_c^a(\beta)$ (see \cite{Alexander_quenched},\cite{Derrida_al_relevance},\cite{GG_T_L_Marginal}). The fact that the critical value of $\alpha$ is $1/2$ is in accordance with a heuristic called Harris criterion (see \cite[p.145-146]{Toninelli_Survey} or \cite[p.116-118]{GG_Book} for example).

As pointed out earlier, these results hold in the case of i.i.d. disorder. In \cite{Poisat_frc}, the author considered a model with \emph{locally} correlated disorder. In this model the $\omega_n$'s constitute a gaussian moving average of finite order (meaning that $\cov(\omega_0,\omega_n)=0$ as soon as $|n|>q$ for some fixed $q\in\mathbb{N}^*$). The annealed system is then related to a pinning model for a particular Markov renewal process, and the annealed critical curve is given in terms of a Perron-Frobenius eigenvalue. The following high-temperature behaviour is also proved: $$h_c^a(\beta) \stackrel{\beta\searrow0}{\sim} -C(\rho,K(\cdot))\frac{\beta^2}{2}$$ where $$C(\rho,K(\cdot)) = 1 + 2 \sum_{n\geq1} \rho_n P(n\in\tau).$$ The question of disorder irrelevance is ongoing work.

\subsection{Two models with markov disorder}

In this paper we continue to investigate the case of correlated disorder by studying the case when $\omega$ is given by the functional of a homogeneous Markov chain with finite state space. The question of a disordered pinning model with Markov dependence was raised in \cite[p.204]{DenH_Book} (in the context of copolymers near a linear selective interface as well). We will look at two different models. Motivations will be discussed in the next subsection.

\subsubsection{Model A.}

Let $X=(X_n)_{n\geq0}$ be a homogeneous irreducible Markov chain on a finite state space $\Sigma$. Its transition matrix will be denoted by $Q$ and its initial distribution is its invariant distribution $\mu_0$. Note that in this case, $X$ is ergodic. The disorder sequence will be given by $$\omega_n = f(X_n)$$ where $f:\Sigma\mapsto\mathbb{R}$ is such that $\mathbb{E}(\omega_0) (= \mu_0(f(X_0))) = 0$. For this first model we will show that the annealed free energy satisfies an implicit equation and that the annealed critical curve can be expressed as the Perron-Frobenius eigenvalue of a positive matrix which depends both on the kernel $K$ and the matrix $Q$. Moreover, we will see along the proof that the annealed system at the annealed critical curve is equivalent to the pinning of a Markov renewal process. We mention that such tools were previously used in the study of pinning with periodic inhomogeneities (see \cite{CaGiZa07}; indeed, our model includes periodic sequences). As noticed in the previous subsection, the annealed critical curve provides a lower bound on $h_c(\beta)$, but the issue of disorder (ir)relevance is open.

\subsubsection{Model B.}

In model B, the disorder sequence will be a Markov chain taking values in $\{-1,+1\}$, with transition matrix $$Q^{(N)} = \left( \begin{array}{cc}  1-N^{-\gamma}& N^{-\gamma}\\ N^{-\gamma} &1-N^{-\gamma} \end{array}\right),$$ where $\gamma\in(0,1)$, and starting at the invariant distribution $\mu = (1/2, 1/2)$. Its law will be denoted by $\mathbb{P}^{(N)}$, the important difference with model A being that this law now depends on the size $N$ of the system. We will give a motivation for introducing such a model at the end of the following section: at this point, we only notice that the disorder sequence for a system of size $N$ consists roughly of strips of $-1$ and $+1$'s, the size of them being of order $N^{1-\gamma}$. In this model we will determine the limit as $N$ tends to infinity of the (averaged) free energy $\mathbb{E}^{(N)}(F_N(\beta,h))$, in terms of the free energy of the homogeneous model.

\subsection{Motivations. The link with DNA denaturation}

One of the applications  of pinning models is the study of DNA denaturation (or melting), that is the process by which the two strands of a DNA molecule separate as the temperature increases. The two strands are constituted of complementary sequences of nucleotides (A with T, and C with G) and are pinned together by hydrogen bonds, the strength of which depends on the pair of nucleotides (AT bonds are weaker than CG ones). As the temperature increases, entropy wins over energy, and the two strands separate by forming loops (first at AT regions). In our context , the renewal points $(\tau_n)_{n\geq0}$ stands for the sites where the two strands are pinned together and $K(\cdot)$ is the law for the length of loops, whereas $\omega$ stands for the nucleotide sequence (and therefore should be a binary sequence). With this interpretation of the model, the phase transition corresponds to DNA denaturation. Furthermore, it has been shown that DNA sequences display long-range correlations (see \cite{Peng} and \cite{Chen} for instance, and \cite{UnzipDNA},\cite{bubble} for models where this fact is taken into account ). For the link between the pinning model and the Poland-Scheraga model, see \cite[Section 1.4]{GG_Book} and references therein.

\section{Model A. The annealed critical curve}

Before we state our result, we need further notations. Let $M(t,\beta,h)$ be the $\Sigma\times\Sigma$ nonnegative matrix defined by
\begin{equation}\label{M}
M(t,\beta,h)(x,y) = K(t)Q^t(x,y)e^{\beta f(y) + h}
\end{equation}
 and for all $b\geq0$, \begin{equation}\label{A}
 A(b,\beta,h) := \sum_{t\geq1} M(t,\beta,h) \exp(-bt).
\end{equation}
Since $Q$ is irreducible, for all $x$ and $y$ there exists $t$ such that $K(t)Q^t(x,y) > 0$, so $A(b,\beta,h)$ is a positive matrix, and its Perron-Frobenius eigenvalue will be denoted by $\lambda(b,\beta,h)$. Henceforth we will write $\lambda(\beta)$ for $\lambda(0,\beta,0)$.

\subsection{Statement of results}

For model A our result is:
\begin{theo}
 For all $h$ and nonnegative $\beta$,
\begin{equation*}
\frac{1}{N}\log \mathbb{E} Z_{N,\beta,h} \stackrel{N\rightarrow +\infty}{\longrightarrow} F^a(\beta,h) \geq0.
\end{equation*}
where $F^a(\beta,h)$ is solution of the implicit equation
\begin{equation}\label{implicit}
\lambda(F^a(\beta,h),\beta,h) = 1
\end{equation}
if $\lambda(0,\beta,h)>1$ and $F^a(\beta,h)=0$ otherwise. The annealed critical curve is
\begin{equation*}
 h_c^a(\beta) := \sup\{h | F^a(\beta,h) = 0\} = - \log \lambda(\beta)
\end{equation*}
\end{theo}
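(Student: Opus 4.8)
The plan is to reduce the \emph{annealed} partition function $\mathbb{E}Z_{N,\beta,h}$ to the renewal mass of a matrix-valued (Markov) renewal process, and then to extract its exponential growth rate from a Perron--Frobenius analysis of $A(b,\beta,h)$. First I would compute $\mathbb{E}Z_{N,\beta,h}$ explicitly. Since $\omega$ and $\tau$ are independent, I condition on the renewal configuration: writing a trajectory of $\tau$ that hits $N$ as $0=t_0<t_1<\cdots<t_k=N$ with increments $s_i=t_i-t_{i-1}$, the Hamiltonian equals $\sum_{i=1}^k(\beta f(X_{t_i})+h)$, so that
\begin{equation*}
\mathbb{E}Z_{N,\beta,h} = \sum_{k\geq1}\ \sum_{\substack{s_1,\dots,s_k\geq1\\ s_1+\cdots+s_k=N}} \Big(\prod_{i=1}^k K(s_i)\Big)\, e^{hk}\, \mathbb{E}\Big[\exp\Big(\beta\sum_{i=1}^k f(X_{t_i})\Big)\Big].
\end{equation*}
Because $X$ starts from its invariant law $\mu_0$ and is Markov, the last expectation factorizes through the kernels $Q^{s_i}$, and weighting $f$ at each arrival state one recognizes exactly the matrix $M(s_i,\beta,h)$ of \eqref{M}. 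Treating $\mu_0$ as a row vector and writing $\mathbf{1}$ for the all-ones column vector, this yields the representation
\begin{equation*}
\mathbb{E}Z_{N,\beta,h} = \mu_0\,\Gamma_N\,\mathbf{1}, \qquad \Gamma_N := \sum_{k\geq1}\ \sum_{\substack{s_1,\dots,s_k\geq1\\ s_1+\cdots+s_k=N}}\prod_{i=1}^k M(s_i,\beta,h),
\end{equation*}
i.e. $\Gamma_N$ is the $N$-th term of the matrix renewal series built on the kernel $M(\cdot,\beta,h)$; this is the step that makes the Markov renewal structure appear and justifies interpreting the annealed system as a pinning of a Markov renewal process.

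Second, I would show the limit $F^a(\beta,h)=\lim_N\frac1N\log\mathbb{E}Z_{N,\beta,h}$ exists and identify it. Splitting a renewal trajectory at a forced renewal at site $N$ gives $\Gamma_{N+M}\geq\Gamma_N\Gamma_M$ entrywise; since all entries of $\Gamma_N$ are positive for $N$ large (irreducibility of $Q$ and positivity of $K$), a standard supermultiplicativity (Fekete) argument gives existence of the limit. To compute it I pass to the generating function: summing the convolution against $e^{-bN}$ factorizes as $\sum_{N\geq1}e^{-bN}\mu_0\Gamma_N\mathbf{1}=\mu_0\big(\sum_{k\geq1}A(b,\beta,h)^k\big)\mathbf{1}$, which converges precisely when the spectral radius $\lambda(b,\beta,h)<1$. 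Hence the radius of convergence of $\sum_N(\mathbb{E}Z_{N,\beta,h})z^N$ is $e^{-b^\ast}$, where $b^\ast$ is the threshold of $\{\lambda(b,\beta,h)<1\}$, and Cauchy--Hadamard gives $F^a=b^\ast$. The Perron--Frobenius input is that $b\mapsto\lambda(b,\beta,h)$ is continuous, strictly decreasing (each entry of $A(b,\beta,h)$ strictly decreases in $b$, and the eigenvalue is monotone in the entries of a positive matrix) and tends to $0$ as $b\to\infty$. Thus if $\lambda(0,\beta,h)>1$ there is a unique $b^\ast>0$ with $\lambda(b^\ast,\beta,h)=1$, which is \eqref{implicit}; and if $\lambda(0,\beta,h)\leq1$ the series converges for every $b>0$, forcing $F^a\leq0$.

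The bound $F^a\geq0$ comes from keeping only the single-jump contribution: $Z_{N,\beta,h}\geq K(N)e^{\beta\omega_N+h}$, so $\mathbb{E}Z_{N,\beta,h}\geq K(N)e^h\mathbb{E}e^{\beta f(X_N)}$ decays only polynomially and $\frac1N\log\mathbb{E}Z_{N,\beta,h}\to0$ from below; combined with the previous paragraph this pins $F^a=0$ on the delocalized side. Finally, the critical curve follows from the factorization $M(t,\beta,h)=e^h M(t,\beta,0)$, which gives $A(0,\beta,h)=e^hA(0,\beta,0)$ and hence $\lambda(0,\beta,h)=e^h\lambda(\beta)$. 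Since $F^a(\beta,h)=0$ iff $\lambda(0,\beta,h)\leq1$, we obtain $F^a(\beta,h)=0\iff h\leq-\log\lambda(\beta)$, that is $h_c^a(\beta)=-\log\lambda(\beta)$.

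I expect the main obstacle to be the rigorous identification of the growth rate with the eigenvalue equation (rather than merely bounding it): controlling the boundary vectors $\mu_0,\mathbf{1}$, handling the degenerate case $\lambda(0,\beta,h)=1$, and justifying continuity and strict monotonicity of $\lambda(b,\beta,h)$ sharply enough to locate $b^\ast$. The clean way to get the \emph{sharp} asymptotics $\mathbb{E}Z_{N,\beta,h}\asymp e^{F^a N}$ in the localized phase is to tilt $M$ into an honest semi-Markov kernel $\bar M(t)(x,y)=\frac{v(y)}{v(x)}M(t,\beta,h)(x,y)e^{-b^\ast t}$, where $v$ is the right Perron eigenvector at $\lambda=1$: since $b^\ast>0$ the inter-arrival tails are exponentially light, so the kernel is positive recurrent with finite mean, and the Markov renewal theorem then yields convergence of the renewal mass and the exact exponential rate.
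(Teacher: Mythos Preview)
Your argument is correct and its first step---rewriting $\mathbb{E}Z_{N,\beta,h}$ as $\mu_0\Gamma_N\mathbf{1}$ with $\Gamma_N$ the matrix renewal mass built on $M(\cdot,\beta,h)$---coincides exactly with the paper's starting point. The two proofs then diverge in how they extract the exponential rate. The paper goes straight to the Perron tilt you describe in your last paragraph: it defines $p(x,y,t)=M(t)(x,y)e^{-F^at}\xi(y)/\xi(x)$ with $\xi$ the right Perron eigenvector at level $\lambda=1$, observes that $p$ is an honest (positive-recurrent) semi-Markov kernel when $\lambda(0,\beta,h)>1$, and obtains the two-sided bound $c\,e^{F^aN}\mathbb{P}(N\in\overline\tau)\le\mathbb{E}Z_{N,\beta,h}\le C\,e^{F^aN}\mathbb{P}(N\in\overline\tau)$; the Markov renewal theorem then gives $\liminf\mathbb{P}(N\in\overline\tau)>0$, hence the sharp asymptotics $\mathbb{E}Z_{N,\beta,h}\asymp e^{F^aN}$ in one stroke. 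Your primary route is softer: Fekete for existence and the generating-function identity $\sum_Ne^{-bN}\mu_0\Gamma_N\mathbf{1}=\mu_0\sum_{k\ge1}A(b,\beta,h)^k\mathbf{1}$ together with Cauchy--Hadamard for identification. This is a legitimate and arguably more elementary alternative (no renewal theorem needed), at the price of yielding only the exponential rate and not the constants.

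The one place to be careful in your version is the Fekete step: $\Gamma_{N+M}\ge\Gamma_N\Gamma_M$ is an entrywise matrix inequality, and it does not immediately give superadditivity of $\log(\mu_0\Gamma_N\mathbf{1})$. One has to pass through a fixed diagonal entry $\Gamma_N(x,x)$ (which is genuinely supermultiplicative) and then use eventual positivity of $\Gamma_N$ to compare $\mu_0\Gamma_N\mathbf{1}$ with $\Gamma_N(x,x)$ up to multiplicative constants; you anticipate this when you flag ``controlling the boundary vectors $\mu_0,\mathbf{1}$'' as the main obstacle. The paper sidesteps this entirely by doing the tilt from the outset, which is precisely the ``clean way'' you outline at the end of your proposal.
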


Notice that (\ref{implicit}) is the analog of the implicit equation for the annealed free energy in the i.i.d. case (which sums up to a homogeneous free energy):
\begin{equation*}
\sum_{n\geq 1} K(n) \exp(-F^a(\beta,h)n) = \exp(-h-\log\Lambda(\beta)).
\end{equation*}

\subsection{Proof}

\begin{proof}
We start by decomposing the quenched partition function according to the number of renewal points before $N$:
\begin{equation*}
 Z_{N,\beta,h,X} =  \sum_{k=1}^N \sum_{\substack{0 =: t_0 < t_1 < \ldots \\ \ldots < t_{k-1} < t_k := N}} e^{kh + \beta(f(X_{t_1})+\ldots+f(X_{t_k}))}\prod_{i=1}^k K(t_i - t_{i-1}).
\end{equation*}
Averaging on $X$ gives for the annealed partition function:
\begin{align*}
 \mathbb{E}Z_{N,\beta,h} = & \sum_{k=1}^N \sum_{\substack{0 =: t_0 < t_1 < \ldots \\ \ldots < t_{k-1} < t_k := N}} \sum_{x_0,x_1,\ldots,x_k \in \Sigma} e^{kh + \beta(f(x_1)+\ldots+f(x_k))}\times \ldots\\& \times \mu_0(x_0) Q^{t_1}(x_0,x_1) Q^{t_2-t_1}(x_1,x_2) \ldots Q^{t_k-t_{k-1}}(x_{k-1},x_k)\\ &\times K(t_1) K(t_2-t_1) \ldots K(t_k-t_{k-1}).
\end{align*}
Recall notation (\ref{M}) and notice that:
\begin{equation*}
 \mathbb{E}Z_{N,\beta,h} =  \sum_{k=1}^N \sum_{\substack{0 =: t_0 < t_1 < \ldots \\ \ldots < t_{k-1} < t_k := N}} \sum_{\substack{x_0,x_1\\ \ldots,x_k \in \Sigma}} \mu_0(x_0) \prod_{i=1}^k M(t_i-t_{i-1},\beta,h)(x_{i-1},x_i).
\end{equation*}
We will denote by $\xi(b,\beta,h)$ an eigenvector (defined up to a scalar, with positive entries) associated to $\lambda(b,\beta,h)$, the Perron-Frobenius eigenvalue of the  positive matrix $A(b,\beta,h)$ defined in (\ref{A}). The dependence on $h$ of these quantities is simple: we have $\lambda(b,\beta,h) = e^h \lambda(b,\beta,0)$ and since $\xi$ is defined up to a scalar, the \textit{h} in $\xi(b,\beta,h)$ is irrelevant, so we will simply write $\xi(b,\beta)$. From the definition of $\lambda$ and $\xi$, we have for all $b\geq0$ and all $x\in \Sigma$,
\begin{equation*}
 \sum_{y\in E} \frac{A(b,\beta,h)(x,y)\xi(b,\beta)(y)}{\lambda(b,\beta,h)\xi(b,\beta)(x)} = 1,
\end{equation*}
which also writes
\begin{equation}\label{step1}
 \sum_{t\geq1} \sum_{y\in E} \frac{M(t,\beta,h)(x,y)e^{-bt}\xi(b,\beta)(y)}{\lambda(b,\beta,h)\xi(b,\beta)(x)} = 1.
\end{equation}

Suppose first that $\lambda(0,\beta,h)>1$. Since $\lambda(F,\beta,h)$ is a continuous (in fact, smooth), strictly decreasing function of $F$, such that $\lambda(F,\beta,h)$ tends to $0$ as $F$ tends to $+\infty$, there exists $F^a = F^a(\beta,h) > 0$ such that $\lambda(F^a,\beta,h)=1$. Let
\begin{equation*}
 p(x,y,t) := M(t)(x,y)e^{-F^at}\frac{\xi(y)}{\xi(x)}
\end{equation*}
(we omit $\beta$ and $h$ for a moment) so that for all $x\in\Sigma$,
\begin{equation}\label{kernel}
 \sum_{t\geq1} \sum_{y\in \Sigma} p(x,y,t) = 1.
\end{equation}
What  equation (\ref{kernel}) means is precisely that $p$ is the kernel of a Markov renewal process (also called semi-Markov kernel): let $\overline{\tau}$ be a process on the integers, defined by
\begin{equation}\label{tau_barre}
 \overline{\tau}_n = \sum_{k=1}^n \overline{T}_k
\end{equation}
where the interarrival times are given by the following Markov chain in $\Sigma\times\mathbb{N}$ (the value of $\overline{T}_0$ is not important):
\begin{equation*}
 \mathbb{P}((\overline{X}_0,\overline{T}_0)=(x_0,t)) = \mu_0(x_0)\delta_{0}(t),
\end{equation*}
\begin{equation*}
 \mathbb{P}((\overline{X}_{n+1},\overline{T}_{n+1})=(y,t)|(\overline{X}_n,\overline{T}_n)=(x,s)) = p(x,y,t).
\end{equation*}
The process $\overline{X}$ is called the modulating chain of the Markov renewal process $\overline{\tau}$. From (\ref{kernel}) and (\ref{step1}), one obtains
\begin{align*}
 \mathbb{E} Z_{N,\beta,h} = e^{F^aN} \sum_{k=1}^N \sum_{\substack{0 =: t_0 < t_1 < \ldots \\ \ldots < t_{k-1} < t_k := N}} \sum_{x_0,x_1,\ldots,x_k \in \Sigma}& \frac{\xi(x_0)}{\xi(x_k)} \mu_0(x_0)\\&\times \prod_{i=1}^k p(x_{i-1},x_i,t_i-t_{i-1}).
\end{align*}
Notice that
\begin{equation*}
 \mathbb{P}(N\in\overline{\tau}) = \sum_{k=1}^N \sum_{\substack{0 =: t_0 < t_1 < \ldots \\ \ldots < t_{k-1} < t_k := N}} \sum_{x_0,x_1,\ldots,x_k \in \Sigma} \mu_0(x_0) \prod_{i=1}^k p(x_{i-1},x_i,t_i-t_{i-1}),
\end{equation*}
so we have 
\begin{equation*}
 0 < c := \min_{x,y\in\Sigma} \frac{\xi(x)}{\xi(y)} \leq \frac{\mathbb{E}Z_{N,\beta,h}}{e^{F^aN} \mathbb{P}(N\in\overline{\tau})} \leq \max_{x,y\in\Sigma} \frac{\xi(x)}{\xi(y)} =: C.
\end{equation*}
Since $\mathbb{P}(N\in\overline{\tau})\leq 1$, it is sufficient to prove that $\liminf_{N\rightarrow +\infty} P(N\in\overline{\tau}) > 0$. Let $x$ be an element of $\Sigma$ and $(\overline{\tau}_n^{x})_{n\geq 0}$ the successive return times of the Markov renewal process $\overline{\tau}$ where the underlying Markov chain is in the state $x$. Then $\overline{\tau}^{x}$ is a (delayed) renewal process with finite mean inter-arrival time $m^x = \sum_{y\in\Sigma} m_y \frac{\mu_0(y)}{\mu_0(x)}$ where $m_y := E(T_1 | X_0 = y)$. Thus we have by the renewal theorem:
\begin{equation*}
 P(N\in\overline{\tau}) \geq P(N\in\overline{\tau}^x) \rightarrow \frac {1}{m^x} >0.
\end{equation*}

\par We are left with the case $\lambda(0,\beta,h) \leq 1$. Note that in any case,
\begin{equation*}
 Z_{N,\beta,h,X} \geq \exp(h+\beta f(X_N)) K(N)
\end{equation*}
so $\liminf_{N\rightarrow\infty} \frac{1}{N} \log \mathbb{E}Z_{N,\beta,h} \geq 0$. Suppose that $\lambda(0,\beta,h) = 1$. Then one can repeat the same arguments as in the case $\lambda(0,\beta,h)>1$, except $F^a = 0$, and one finds $\mathbb{E}Z_{N,\beta,h}\leq C \times \mathbb{P}(N\in\overline{\tau})$, so $\lim_{N\rightarrow} \frac{1}{N}\log\mathbb{E}Z_{N,\beta,h} = 0$. If $\lambda(0,\beta,h)<1$, we also take $F^a=0$ in (\ref{kernel}) and $p$ is now a sub-probability. Again, we have $\mathbb{E}Z_{N,\beta,h}\leq C \times \mathbb{P}(N\in\overline{\tau})$, where $\overline{\tau}$ is a transient Markov renewal process, and $\lim_{N\rightarrow +\infty} \frac{1}{N}\log\mathbb{E}Z_{N,\beta,h} = 0$.

We have now proven the first point of the theorem. The second point is an immediate consequence:
\begin{align*}
 F_a = F_a(\beta,h) > 0 &\Leftrightarrow \lambda(0,\beta,h) > 1 \\
& \Leftrightarrow e^h \lambda(0,\beta,0) > 1 \\
& \Leftrightarrow h >  -\log \lambda(0,\beta,0) = -\log \lambda(\beta)
\end{align*}
which means that
\begin{equation*}
 h_c^a(\beta)  = - \log \lambda(\beta).
\end{equation*}
\end{proof}

\subsection{Examples}

\subsubsection{Moving averages}
Let $\varepsilon = (\varepsilon_n)_{n\in\mathbb{Z}}$ be a sequence of i.i.d. random variables with values in a finite state space $A$. Let $(a_0,\cdots,a_q)$ be in $\mathbb{R}^{q+1}$ and define $\omega$ a $q$-th order moving average by $\omega_n = a_0 \varepsilon_n + a_1 \varepsilon_{n-1} + \cdots + a_q \varepsilon_{n-q}$. These locally dependent variables are in the scope of this paper since $\omega_n = f(X_n)$ where $X_n = (\varepsilon_{n-q},\cdots,\varepsilon_n)$ is indeed a Markov chain on $\Sigma := A^{q+1}$ and $f$ is a function on $\Sigma$: $f(x_0,x_1,\cdots,x_q) = a_0 x_q + a_1 x_{q-1} + \cdots + a_q x_0$.

\par Take for instance $q=1$, $A=\{-1,+1\}$, and $\mathbb{P}(\epsilon_n = 1)=\mathbb{P}(\epsilon_n = -1)=1/2$. Then we have to consider a markov chain in $$A^2=\{(-1,-1),(-1,+1),(+1,-1),(+1,+1)\}$$ with transition matrix $$Q = \left(\begin{array}{cccc}1/2& 1/2 & 0& 0\\0&0&1/2&1/2 \\1/2& 1/2 & 0& 0 \\0&0&1/2&1/2\end{array} \right).$$ Since for all $t\geq2$, $$Q^t = \left(\begin{array}{cccc}1/4&1/4&1/4&1/4\\1/4&1/4&1/4&1/4\\1/4&1/4&1/4&1/4\\1/4&1/4&1/4&1/4\end{array} \right),$$ the matrix $A(0,\beta,0)$ can be easily computed, and its Perron-Frobenius eigenvalue turns out to be $$\lambda(\beta)=\cosh(a_0\beta)\cosh(a_1\beta)\left(1+K(1)(\frac{\cosh((a_0+a_1)\beta)}{\cosh(a_0\beta)\cosh(a_1\beta)}-1)\right).$$ One can check that we find the same result by using the method suggested in \cite{Poisat_frc} for $q=1$.

\subsubsection{A chain in $\{-1,+1\}$}

We consider a particular case where the annealed critical curve can be computed. Suppose that $\omega$ is a Markov chain consisting of $+1$ and $-1$'s, with transition matrix ($0\leq \epsilon < 1$)
\begin{equation*}
 Q = \left(\begin{array}{cc}
      \epsilon & 1-\epsilon\\ 1-\epsilon & \epsilon
     \end{array}\right)
\end{equation*}
and invariant probability measure $\mu = \frac{\delta_{-1}+ \delta_{+1}}{2}$. If $\epsilon = 0$, then the sequence is periodic (see \cite{CaGiZa07}) whereas $\epsilon = 1/2$ is the i.i.d. setting. The eigenvalues of $Q$ are $1$ and $2\epsilon -1$ with respective eigenvectors $(1,1)$ and $(1,-1)$, so for all $t\geq1$,
\begin{equation}
 Q^t = \frac{1}{2} \left( \begin{array}{cc} 1 + (2\epsilon -1)^t & 1 - (2\epsilon -1)^t  \\ 1 - (2\epsilon -1)^t &  1 + (2\epsilon -1)^t \end{array} \right)
\end{equation}
and
\begin{equation*}
 A(0,\beta,0) = \left( \begin{array}{cc} e^{-\beta}p(\epsilon) & e^{\beta}(1-p(\epsilon)) \\ e^{-\beta}(1-p(\epsilon))  &  e^{\beta}p(\epsilon)\end{array} \right)
\end{equation*}
where $p(\epsilon) := \sum_{t\geq 1} K(t) \frac{1+(2\epsilon -1)^t}{2}$. Then, we find by computing the Perron-Frobenius eigenvalue of $A$:
\begin{equation*}
 h_c^a(\beta) = -\log\left(p(\epsilon) \cosh(\beta) + \sqrt{p(\epsilon)^2\cosh^2(\beta) - 2p(\epsilon) +1}  \right)
\end{equation*}
and one can check the consistency with the periodic and i.i.d. cases.

\section{Model B. The phase diagram}

Consider the example of the Markov chain in $\{-1,+1\}$ given above, and notice that $h_c^a(\beta)$ is increasing in $\epsilon$ if $\epsilon$ is in $[1/2,1]$. By making $\epsilon$ tend to $1$ (i.e. increasing correlations of disorder), we make $h_c^a(\beta)$ tend to $-\beta$, which is the minimum of all possible values for the critical curve (simply because $\beta\omega \leq \beta$). Of course we do not want to make $\epsilon$ equal to $1$, which would lead to a homogeneous model with reward equal to $-1$ or $1$ with probability one half. That is why we introduce model B, where $\epsilon$ tends to 1 (while staying positive, so that asymptotically there is the same number of $-1$ and $+1$'s) as the size of the system goes to $+\infty$, as a caricature of a disordered model with strong correlations.

\subsection{Statement of the result}

In the following, we will denote by $F(h):=F(0,h)$ the free energy associated to a polymer pinned at a homogeneous interface with reward $h$. In this section we will prove the following convergence:
\begin{theo}
 For all $\beta\geq0$, $h\in\mathbb{R}$,
\begin{equation*}
 \mathbb{E}^{(N)}F_N(\beta,h) \stackrel{N \rightarrow +\infty}{\longrightarrow} F(\beta,h):=\left\{\begin{array}{ccc}
             0 & \hbox{ if } & h \leq -\beta\\
	\frac{F(h+\beta)}{2}	& \hbox{ if } &  -\beta < h < \beta\\
	\frac{F(h+\beta) + F(h-\beta)}{2}	& \hbox{ if } & h \geq \beta\\
            \end{array}\right..
\end{equation*}
\end{theo}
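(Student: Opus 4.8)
The plan is to recognize that the three-case formula is a single clean expression: with the convention $F(g)=0$ for $g\le 0$ (recall $h_c(0)=0$, so $F(g)>0\iff g>0$), the stated limit equals $\tfrac12 F(h+\beta)+\tfrac12 F(h-\beta)$ in all three regimes, as one checks directly. I would then sandwich $\mathbb{E}^{(N)}F_N$ between a lower and an upper bound that both converge to this number. Throughout I decompose a realization of $\omega$ into its maximal constant strips (blocks) $B_1,\dots,B_J$ of lengths $\ell_1,\dots,\ell_J$ and signs $\sigma_1,\dots,\sigma_J$, and set $g_i:=h+\beta\sigma_i\in\{h-\beta,h+\beta\}$ for the effective homogeneous field inside $B_i$. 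Write $Z^{\mathrm{hom}}_\ell(g):=E(e^{g\sum_{1}^{\ell}\delta_n}\delta_\ell)=Z_{\ell,0,g}$ for the homogeneous constrained partition function; I will use $\tfrac1\ell\log Z^{\mathrm{hom}}_\ell(g)\to F(g)$ together with $Z^{\mathrm{hom}}_\ell(g)\le e^{\ell F(g)}$ for every $\ell$, the latter by super-additivity of $\ell\mapsto\log Z^{\mathrm{hom}}_\ell(g)$. Two elementary facts about the chain drive everything: since it starts from the invariant law $(1/2,1/2)$ and is symmetric under $\pm1$, one has exactly $\mathbb{E}^{(N)}[\#\{n\le N:\omega_n=+1\}]=N/2$, whence $\mathbb{E}^{(N)}\big[\sum_i\ell_i F(g_i)\big]=\mathbb{E}^{(N)}\big[\sum_{n=1}^N F(h+\beta\omega_n)\big]=\tfrac N2 F(h+\beta)+\tfrac N2 F(h-\beta)$; and the number of blocks satisfies $\mathbb{E}^{(N)}[J]=N^{1-\gamma}+1$, which since $\gamma>0$ is $o(N/\log N)$.

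For the lower bound I would constrain the renewal to touch the interface at every block boundary; this only lowers the partition function and makes it factorize, giving the pathwise inequality $Z_{N,\beta,h,\omega}\ge\prod_{i=1}^J Z^{\mathrm{hom}}_{\ell_i}(g_i)$. It then suffices to show $\mathbb{E}^{(N)}\big[\tfrac1N\sum_i\big(\ell_i F(g_i)-\log Z^{\mathrm{hom}}_{\ell_i}(g_i)\big)\big]\to0$, each summand being nonnegative by super-additivity. I would split blocks into long ($\ell_i\ge\ell_0$) and short ($\ell_i<\ell_0$). On long blocks the summand is at most $\ell_i\,\varepsilon(\ell_0)$ with $\varepsilon(\ell_0):=\sup_{\ell\ge\ell_0}\max_{g\in\{h\pm\beta\}}(F(g)-\tfrac1\ell\log Z^{\mathrm{hom}}_\ell(g))\to0$, contributing at most $\varepsilon(\ell_0)$ after dividing by $N$. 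On short blocks I bound crudely via $Z^{\mathrm{hom}}_\ell(g)\ge K(\ell)e^{g}$, so each contributes at most a constant $D(\ell_0)$, and their total has expectation $\le D(\ell_0)\,\mathbb{E}^{(N)}[J]/N\to0$. Letting $N\to\infty$ then $\ell_0\to\infty$ yields $\liminf_N\mathbb{E}^{(N)}F_N\ge\tfrac12 F(h+\beta)+\tfrac12 F(h-\beta)$.

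For the upper bound I would decompose a trajectory according to the set $S\subseteq\{1,\dots,J\}$ of blocks it visits and, for each visited block, the first and last contacts $a_i\le b_i$ inside it. Summing over the internal contacts gives a within-block factor $e^{g_i}Z^{\mathrm{hom}}_{b_i-a_i}(g_i)\le e^{|g_i|}e^{\ell_i F(g_i)}$ (using $Z^{\mathrm{hom}}_{d}(g)\le e^{dF(g)}$, $d\le\ell_i$, $F\ge0$); every loop crossing one or more boundaries carries a factor $K(\cdot)\le1$ and is harmless (in particular a single long loop that \emph{skips} entire delocalized blocks is automatically dominated, since the skipped blocks still contribute $e^{\ell_i F(g_i)}\ge1$, which I keep). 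Bounding $\prod_{i\in S}e^{\ell_i F(g_i)}\le e^{\sum_i\ell_i F(g_i)}$ and counting the choices of $S$ and of the pairs $(a_i,b_i)$ by $2^J\prod_i\ell_i^2$, I obtain $\log Z_{N,\beta,h,\omega}\le\sum_i\ell_i F(g_i)+C\,J\log N$ for large $N$. Taking $\mathbb{E}^{(N)}$ gives $\mathbb{E}^{(N)}F_N\le\tfrac12 F(h+\beta)+\tfrac12 F(h-\beta)+C(\log N)\,\mathbb{E}^{(N)}[J]/N$, whose error tends to $0$ because $\gamma>0$; this is the matching $\limsup$.

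Combining the two bounds proves the theorem. The main obstacle is not either bound in isolation but the uniform control of the \emph{entropy of the block structure}: I must ensure that the combinatorial cost of choosing where the trajectory enters and leaves each block, together with the contributions of atypically short blocks, is $o(N)$ after averaging. This is exactly what $\mathbb{E}^{(N)}[J]=N^{1-\gamma}+1=o(N/\log N)$ provides, and it is the only place the scaling $\gamma\in(0,1)$ is used. A secondary subtlety, cleanly resolved by the decomposition above, is that loops straddling or skipping blocks could a priori push the free energy above the naive forced-contact value; bounding every crossing loop by $1$ while retaining $e^{\ell_i F(g_i)}$ per block shows that skipping only helps in the delocalized regime, where $F(g_i)=0$ anyway, so nothing is lost.
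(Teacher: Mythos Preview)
Your proof is correct and shares the paper's overall architecture: constrain the renewal to touch every block boundary for the lower bound, and show that the combinatorial cost of this constraint is $o(N)$ in expectation because $\mathbb{E}^{(N)}[J]=N^{1-\gamma}+O(1)=o(N/\log N)$. Your lower bound is essentially the paper's (it factorizes $Z^c_N$ into homogeneous pieces and uses the same block-count estimate), though the paper replaces your long/short block split by the uniform bounds $Z^{\mathrm{hom}}_\ell(g)\ge c(g)e^{\ell F(g)}$ for $g>0$ and $Z^{\mathrm{hom}}_\ell(g)\ge c_2(g)K(\ell)$ for $g<0$, which makes the argument slightly shorter.

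The upper bound is where the two genuinely differ. The paper proceeds \emph{iteratively}: it shows that forcing a contact at a single prescribed site $n$ costs at most a polynomial factor, via the identity
\[
E\big(e^{H_N}\delta_N(1-\delta_n)\big)=\sum_{a,b} Z_{n-a}\,K(a+b)\,e^{\beta\omega_{n+b}+h}\,Z_{N-n-b}\le CN^{2+\alpha+\varsigma}\,E\big(e^{H_N}\delta_N\delta_n\big),
\]
and then applies this $B_N$ times to obtain $Z_N\le(1+CN^{2+\alpha+\varsigma})^{B_N}Z^c_N$. You instead bound $Z_N$ \emph{globally}, decomposing trajectories by the set $S$ of visited blocks and the entry/exit pair $(a_i,b_i)$ in each, bounding every crossing $K(\cdot)\le 1$ and every within-block factor by $e^{\ell_iF(g_i)}$, and absorbing the entropy $2^J\prod_i\ell_i^2\le(2N^2)^J$ into the error term. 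Both routes land on the same inequality $\log Z_N\le\sum_i\ell_iF(g_i)+CJ\log N$ and both use only $\mathbb{E}^{(N)}[J]\log N=o(N)$; the paper's argument gives a sharper comparison between $Z_N$ and $Z_N^c$ (useful if one wanted finer asymptotics), while yours is more self-contained and avoids the somewhat delicate ratio $K(a+b)/(K(a)K(b))$.
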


\begin{center}
\begin{figure}\label{phasediagram}
 \centering
 \includegraphics[scale=0.3]{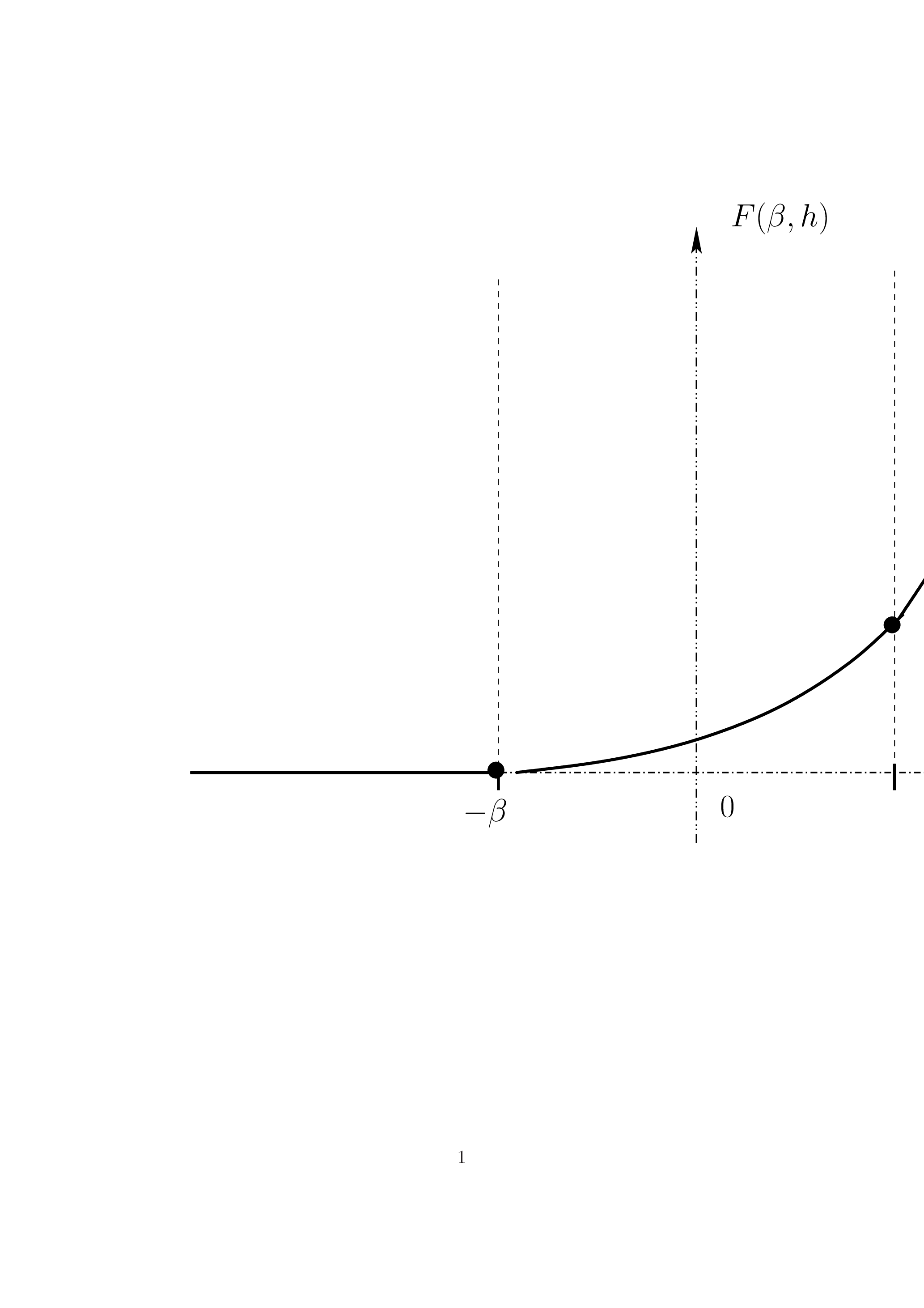}  
 \caption{A picture of the phase diagram at $\beta$ fixed, for model B: $\beta$ and $-\beta$ are the points where $F(\beta,\cdot)$ is not analytic (this is a direct consequence of the non-analiticity of the homogeneous free energy at $h=0$, see \cite{GG_Book}).}
\end{figure}
\end{center}

In this model, we therefore have the full phase diagram picture (see figure \ref{phasediagram}). What the previous statement says is that there is one delocalized phase and two localized phases. For $h\leq -\beta$, the whole interface is repulsive and it is clear that the free energy is zero. If $h>-\beta$ but $h\leq\beta$, then half of the strips are attractive (with reward $h+\beta$), the other half being repulsive ($h-\beta$). Since the strips are large enough, the polymer can be pinned at those attractive strips, so the resulting free energy is half of the free energy associated to the homogeneous reward $h+\beta$. If $h>\beta$, the whole interface is attractive, and the free energy should be the mean of the free energy received on ($+1$)-strips ($F(\beta+h)$) and the one received on ($-1$)-strips ($F(\beta-h)$). Hence it is a phase diagram with two phase transitions, in the sense that there are two points, $-\beta$ and $\beta$, where the free energy function is not analytic.

Before we give the sketch of the proof, we define the following random variables, related to disorder:
\begin{equation*}
\begin{array}{l}
 L_0 := 0\\
L_1 = \inf\{n\geq 1, \omega_n \neq \omega_0\}-1\\
\vdots\\
L_{k+1} = \inf\{n>L_k, \omega_n \neq \omega_{L_{k}+1}\}-1
\end{array}
\end{equation*}
which are the endpoints of the $\pm 1$ strips,
\begin{equation*}
 l_n := L_n - L_{n-1},
\end{equation*}
the lengths of the strips, and
\begin{equation*}
 B_N := \sup\{k\geq0, L_k \leq N\}
\end{equation*}
the number of complete strips between $0$ and $N$.

First we will prove that the theorem is true for a modified version of the model, where the polymer is constrained to visit the endpoints of each strip: its partition function writes
\begin{equation*}
 Z_{N,\beta,h}^c = E\left( \exp(\sum_{n=1}^N (\beta \omega_n + h) \delta_n) (\prod_{i=1}^{B_N} \delta_{L_i}) \delta_N \right),
\end{equation*}
and its free energy,
\begin{equation*}
 F_N^c(\beta,h) = \frac{1}{N}\log Z_{N,\beta,h}^c.
\end{equation*}
Then we will show that one can safely pin the polymer at those endpoints: for each endpoint, the cost is at most polynomial in $N$, and what make things work is that the number of endpoints is of order $N^{1-\gamma} \ll N$.

\subsection{Proof}

In this section, $Z_{N,h}^c$ is the partition function of a polymer pinned at $0$ and $N$, in a homogeneous environment with reward $h$. First we prove:
\begin{lem}
 $\mathbb{E}^{(N)}F^c_N(\beta,h) \stackrel{N \rightarrow +\infty}{\longrightarrow} F(\beta,h).$
\end{lem}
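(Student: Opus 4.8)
The plan is to use the pinning at every strip endpoint to factorize the constrained partition function over strips, thereby reducing the whole statement to first-moment computations on the strip lengths. Writing $\sigma_i\in\{-1,+1\}$ for the common sign of $\omega$ on the $i$-th strip, $g_\pm:=h\pm\beta$ for the two possible rewards, $\ell_i:=l_i$ for $i\le B_N$ and $\ell_{B_N+1}:=N-L_{B_N}$ for the incomplete last strip (so that $\sum_{i=1}^{B_N+1}\ell_i=N$), the renewal property together with the constraint $\delta_{L_1}=\dots=\delta_{L_{B_N}}=\delta_N=1$ gives
\begin{equation*}
 Z^c_{N,\beta,h}=\prod_{i=1}^{B_N+1} Z^c_{\ell_i,\,h+\beta\sigma_i},\qquad\text{hence}\qquad \log Z^c_{N,\beta,h}=\sum_{i=1}^{B_N+1}\log Z^c_{\ell_i,\,h+\beta\sigma_i}.
\end{equation*}
Since on each strip $\omega$ is constant, each factor is a homogeneous constrained partition function with reward $g_+$ or $g_-$. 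The only disorder statistic that will matter is $\tilde S_N^{\pm}:=\sum_{i:\sigma_i=\pm1}\ell_i=\#\{1\le n\le N:\omega_n=\pm1\}$, whose expectation is exactly $N/2$ because $\mu=(1/2,1/2)$ is invariant, so $\mathbb P^{(N)}(\omega_n=+1)=1/2$ for every $n$.

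For the upper bound I would invoke superadditivity of the homogeneous constrained partition function, $Z^c_{m+n,g}\ge Z^c_{m,g}Z^c_{n,g}$ (concatenate two renewal configurations through the common point), so that by Fekete $\tfrac1n\log Z^c_{n,g}\uparrow F(g)$ and therefore $\log Z^c_{n,g}\le nF(g)$ for all $n$. This gives
\begin{equation*}
 \log Z^c_{N,\beta,h}\le F(g_+)\,\tilde S_N^{+}+F(g_-)\,\tilde S_N^{-},
\end{equation*}
and taking $\mathbb E^{(N)}$, dividing by $N$ and using $\mathbb E^{(N)}\tilde S_N^{\pm}=N/2$ yields $\mathbb E^{(N)}F^c_N(\beta,h)\le \tfrac12\bigl(F(g_+)+F(g_-)\bigr)$ for every $N$.

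For the lower bound I would fix $\varepsilon>0$ and pick $L_0$ so large that $\tfrac1n\log Z^c_{n,g}\ge F(g)-\varepsilon$ for all $n\ge L_0$ and both $g\in\{g_+,g_-\}$; moreover $Z^c_{n,g}\ge e^{g}K(n)>0$ bounds the finitely many short strips from below by a constant $-C_0$. Splitting each factor into long and short strips,
\begin{equation*}
 \log Z^c_{N,\beta,h}\ge \sum_{s\in\{+,-\}}(F(g_s)-\varepsilon)\sum_{\substack{\sigma_i=s\\ \ell_i\ge L_0}}\ell_i\;-\;C_0\,(B_N+1).
\end{equation*}
Each inner sum is sandwiched between $\tilde S_N^{s}-L_0(B_N+1)$ and $\tilde S_N^{s}$, so after taking $\mathbb E^{(N)}$ and dividing by $N$ it converges to $1/2$ provided $\mathbb E^{(N)}(B_N+1)=o(N)$; the same estimate makes the $C_0(B_N+1)$ term vanish. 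Letting $N\to\infty$ and then $\varepsilon\to0$ gives the matching lower bound, so $\mathbb E^{(N)}F^c_N(\beta,h)\to\tfrac12\bigl(F(h+\beta)+F(h-\beta)\bigr)$, which is exactly $F(\beta,h)$ once one recalls $F(g)=0$ for $g\le0$.

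The main obstacle is the renewal bookkeeping for $B_N$, which is genuinely $N$-dependent: the strip lengths $l_i$ are i.i.d.\ geometric with success probability $N^{-\gamma}$, hence mean $N^{\gamma}$, and I must show $\mathbb E^{(N)}B_N=O(N^{1-\gamma})=o(N)$ uniformly as $N^{-\gamma}\to0$. This follows from the elementary renewal estimate $\mathbb E^{(N)}L_{B_N+1}=N^{\gamma}\,\mathbb E^{(N)}(B_N+1)$ (Wald) together with $L_{B_N+1}\le N+l_{B_N+1}$ and the size-biased bound $\mathbb E^{(N)}l_{B_N+1}=O(N^{\gamma})$, all of which are explicit thanks to the memorylessness of the geometric law; the condition $\gamma\in(0,1)$ is exactly what guarantees $N^{1-\gamma}=o(N)$. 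Everything else is a first-moment computation, so no concentration of the strip statistics is needed.
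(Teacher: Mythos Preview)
Your proof is correct and follows essentially the same strategy as the paper: factorize $Z^c_{N,\beta,h}$ over strips into homogeneous constrained partition functions, bound each factor above and below so that the error per strip is at most polynomial (or constant), and then use $\mathbb E^{(N)}B_N=o(N)$ together with $\mathbb E^{(N)}\tilde S_N^{\pm}=N/2$. The only notable difference is bookkeeping: the paper uses the explicit estimates $c(\epsilon)e^{F(\epsilon)l}\le Z^c_{l,\epsilon}\le e^{F(\epsilon)l}$ and $Z^c_{l,\epsilon}\ge c_2(\epsilon)K(l)$ in place of your superadditivity/$\varepsilon$ argument, and it obtains $\mathbb E^{(N)}B_N=N^{1-\gamma}$ in one line by writing $B_N=\sum_{n=1}^{N}\mathbf 1_{\{\omega_n\neq\omega_{n+1}\}}$ rather than via Wald's identity.
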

\begin{proof}
 By Markov property we may write:
\begin{equation}\label{dec}
 Z_{N,\beta,h}^c = \left( \prod_{i=1}^{B_N} Z_{l_i,\beta\omega_{L_{i-1}+1}+h}^c \right) Z_{N-L_{B_N},\beta\omega_{L_{B_N}+1}+h}^c
\end{equation}
We will use the following estimates of the partition function for the homogeneous pinning (see \cite{GG_Book}, we do not give the sharpest versions, but these will be enough for our purpose).
If $\epsilon>0$, there exists $c(\epsilon)>0$ such that for all $l\geq1$,
\begin{equation*}
 c(\epsilon) e^{F(\epsilon)l} \leq Z_{l,\epsilon}^c \leq e^{F(\epsilon)l}.
\end{equation*}
If $\epsilon<0$, there exists $c_2(\epsilon)>0$ such that for all $l\geq1$,
\begin{equation*}
 c_2(\epsilon)K(l) \leq Z_{l,\epsilon}^c \leq 1
\end{equation*} and for $\epsilon = 0$,
\begin{equation*}
 1 \geq P(l \in \tau) = Z_{l,0}^c \geq \left\{ \begin{array}{cc} c_3>0 & \hbox{ if } \alpha>1\\ cl^{\alpha-1-\varsigma} & \hbox{ if } \alpha \in (0,1] \end{array}\right.
\end{equation*}
(with $\varsigma>0$ but arbitrarily small).

Let's begin with the case $h\in(-\beta,\beta)$. If we apply the estimates above to the decomposition in (\ref{dec}), we get: (we write $h=-\beta+\epsilon$)
\begin{equation*}
 Z_{N,\beta,h}^c \leq \exp(F(\epsilon)\sum_{n=1}^N \bf{1}_{\{\omega_n=1\}})
\end{equation*}
and
\begin{equation*}
 Z_{N,\beta,h}^c \geq c(\epsilon)^{B_N/2}\exp(F(\epsilon)\sum_{n=1}^N \mathbf{1}_{\{\omega_n=1\}}) \times c_2(\epsilon)^{B_N/2} (\prod_{i=1, \omega_{L_{i-1}}=-1}^{B_N}l_i^{-\theta})
\end{equation*}
with some power $\theta>0$. In the last product, each factor is larger than $N^{-\theta}$. Taking the log and dividing by $N$, we obtain:
\begin{equation*}
\frac{B_N(\log(c(\epsilon)c_2(\epsilon))-\theta \log N)}{2N} \leq \frac{1}{N}\log Z_{N,\beta,h}^c - \frac{F(\epsilon)}{N}\sum_{n=1}^N \mathbf{1}_{\{\omega_n=1\}}) \leq 0
\end{equation*}
The result follows by taking the expectation with respect to $\mathbb{P}^{(N)}$ and letting $N$ go to infinity, since
\begin{equation*}
 \mathbb{E}^{(N)} B_N = \sum_{n=1}^N \mathbb{P}^{(N)}(\omega_n \neq \omega_{n+1}) = N \times N^{-\gamma} \ll N.
\end{equation*}
The proof is the same for $h\geq\beta$.
\end{proof}

We now want to prove that $\mathbb{E}^{(N)}F_N(\beta,h)$ and $\mathbb{E}^{(N)}F^c_N(\beta,h)$ have the same limit. Since, obviously,
\begin{equation*}
 Z_{N,\beta,h} \geq Z_{N,\beta,h}^c
\end{equation*}
we have
\begin{equation*}
 \liminf \mathbb{E}^{(N)}F_N(\beta,h) \geq \lim \mathbb{E}^{(N)}F_N^c(\beta,h) = F(\beta,h)
\end{equation*}
so the only thing we need to prove is
\begin{lem}
 $\limsup \mathbb{E}^{(N)}F_N(\beta,h) \leq \lim \mathbb{E}^{(N)}F^c_N(\beta,h) (= F(\beta,h))$
\end{lem}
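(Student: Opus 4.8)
The plan is to show that relaxing the pinning constraint at the strip endpoints inflates the partition function by at most a factor that is polynomial in $N$ \emph{per endpoint}; since there are only $B_N \approx N^{1-\gamma}$ endpoints, this factor is invisible on the free-energy scale. Concretely, I aim to prove a deterministic ($\omega$-wise) bound
\[
 Z_{N,\beta,h} \leq (R_N)^{B_N}\, Z^c_{N,\beta,h},
\]
with $R_N \leq \mathrm{poly}(N)$ uniformly in the disorder. Granting this, I take logarithms, divide by $N$, and average:
\[
\mathbb{E}^{(N)} F_N(\beta,h) \leq \mathbb{E}^{(N)} F^c_N(\beta,h) + \frac{\log R_N}{N}\,\mathbb{E}^{(N)} B_N .
\]
Since $\log R_N = O(\log N)$ and $\mathbb{E}^{(N)} B_N = N^{1-\gamma}$ (computed above), the last term is $O(N^{-\gamma}\log N)\to 0$, and Lemma~3.1 yields $\limsup_N \mathbb{E}^{(N)} F_N(\beta,h) \leq F(\beta,h)$, which is the claim.

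The core is the per-endpoint insertion cost. Write $Z_{N,\beta,h} = \sum_{S} w(S)$, the sum over renewal sets $S = \{0 = t_0 < t_1 < \cdots < t_k = N\}$ with weight $w(S) = \prod_{i=1}^k K(t_i - t_{i-1})\, e^{\beta\omega_{t_i}+h}$, and likewise $Z^c_{N,\beta,h} = \sum_{T} w(T)$ over those $T$ that contain all of $L_1,\ldots,L_{B_N}$. Consider the map $\phi(S) = S \cup \{L_1,\ldots,L_{B_N}\}$, which sends a free configuration to a constrained one: passing from $S$ to $\phi(S)$ only splits some interarrival intervals and adds reward factors, so
\[
\frac{w(S)}{w(\phi(S))} = \prod_{\text{added } L_j} e^{-(\beta\omega_{L_j}+h)} \prod_{\text{split intervals}} \frac{K(a+b)}{K(a)K(b)},
\]
where each split replaces an interval of length $a+b$ (with $a,b\geq 1$) by two of lengths $a$ and $b$. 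Using $K(n) = L(n)\,n^{-(1+\alpha)}$ and $\tfrac{ab}{a+b}\leq \min(a,b)\leq N$, the power part of each factor is at most $N^{1+\alpha}$, while Potter's bounds control the slowly varying ratio $L(a+b)/(L(a)L(b))$ by $N^{o(1)}$; together with $e^{-(\beta\omega_{L_j}+h)}\leq e^{\beta+|h|}$ this gives a uniform bound $\tfrac{K(a+b)}{K(a)K(b)}\,e^{\beta+|h|}\leq R_N = \mathrm{poly}(N)$. An interval receiving $m$ endpoints is split into $m+1$ pieces and costs at most $R_N^{m}$, and the total number of inserted endpoints is at most $B_N$, so $w(S) \leq R_N^{B_N}\, w(\phi(S))$ for every $S$. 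Summing over $S$ and grouping by image,
\[
Z_{N,\beta,h} = \sum_S w(S) \leq R_N^{B_N}\sum_S w(\phi(S)) = R_N^{B_N}\sum_{T} |\phi^{-1}(T)|\, w(T),
\]
and since a preimage of $T$ is any $S\subseteq T$ obtained by deleting a subset of $\{L_1,\ldots,L_{B_N}\}\setminus\{N\}$, we have $|\phi^{-1}(T)|\leq 2^{B_N}$, giving $Z_{N,\beta,h}\leq (2R_N)^{B_N} Z^c_{N,\beta,h}$ after absorbing the factor $2^{B_N}$ into a new polynomial $R_N' = 2R_N$.

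The main obstacle I anticipate is making the per-endpoint cost genuinely \emph{uniform}: the bound on $K(a+b)/(K(a)K(b))$ must hold simultaneously for all interval lengths with $a,b\geq 1$ and $a+b\leq N$, and the slowly varying factor $1/(L(a)L(b))$ can become large for the smallest lengths, so one must invoke Potter-type bounds to keep it sub-polynomial. Once that is in hand, the exponent $B_N$ is controlled purely in expectation through $\mathbb{E}^{(N)}B_N = N^{1-\gamma}$, so no almost-sure control of $B_N$ is required, and the conclusion follows directly from Lemma~3.1.
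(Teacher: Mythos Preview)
Your proof is correct and follows essentially the same strategy as the paper: both show that inserting each strip endpoint costs at most a polynomial-in-$N$ factor via the bound $e^{-(\beta\omega_n+h)}K(a+b)/(K(a)K(b))\leq CN^{2+\alpha+\varsigma}$, and then use $\mathbb{E}^{(N)}B_N=N^{1-\gamma}$ to kill the resulting $(\mathrm{poly}(N))^{B_N}$ factor on the free-energy scale. The only cosmetic difference is that the paper inserts the endpoints one at a time by a partition-function identity (obtaining $Z_{N,\beta,h}\leq (1+CN^{2+\alpha+\varsigma})^{B_N}Z^c_{N,\beta,h}$ directly, without a multiplicity factor), whereas you insert them all at once via the configuration map $\phi$ and pay an extra harmless $2^{B_N}$ for $|\phi^{-1}(T)|$.
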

\begin{proof}
In this proof we use the following notation:
\begin{equation*}
 \overline{K}(n) = \sum_{l>n} K(l)
\end{equation*}
with the fact that $\overline{K}(n) \sim n K(n)$. First we prove that the cost of pinning the polymer at one arbitrary point $n\in\{1,\ldots,N\}$, for $\beta$ and $h$ fixed, is at most polynomial in $N$. If $n$ is not a contact point, then we can decompose the partition on the last point visited before $n$ (say $n-a$) and the first point visited after $n$ (say $n+b$). One obtains
\begin{align*}
&E(\exp(H_N)\delta_N(1-\delta_n))\\ &= \sum_{a=1}^n \sum_{b=1}^{N-n} Z_{n-a,\beta,h,\omega}e^{\beta\omega_{n+b}+h}K(a+b)Z_{N-n-b,\beta,h,\theta^{n+h}\omega}\\
& = \sum_{a=1}^n \sum_{b=1}^{N-n} Z_{n-a,\beta,h}K(a)e^{\beta\omega_n+h,\omega}K(b)e^{\beta\omega_{n+b}+h}Z_{N-n-b,\beta,h,\theta^{n+h}\omega} C(a,b,n)
\end{align*}
where $(\theta\omega)_n = \omega_{n+1}$ and
\begin{align*}
C(a,b,n) :=e^{-(\beta\omega_n+h)}\frac{K(a+b)}{K(a)K(b)}\leq e^{\beta+|h|}\frac{\overline{K}(b)}{K(a)K(b)}&\leq Cba^{1+\alpha+\varsigma} \\&\leq C N^{2+\alpha+\varsigma}.
\end{align*}
with some positive $\varsigma$. Therefore, we have $$Z_{N,\beta,h} \leq (1+CN^{2+\alpha+\varsigma}) E(\exp(H_N)\delta_N\delta_n).$$ Repeating the operation $B_N$ times, we get
\begin{equation*}
 Z_{N,\beta,h} \leq (1+CN^{2+\alpha+\varsigma})^{B_N}\times Z_{N,\beta,h}^c.
\end{equation*}
taking the log, dividing by $N$, and averaging over $\omega$ (again,  we use $\mathbb{E}^{(N)}B_N = N^{1-\gamma}$), we get the result.
\end{proof}

\subsection{A remark on the general case}

The result can actually be generalized to an arbitrary finite number of states. We give it here for completeness. However, the generalization of the proof given for two-state Markov chains is quite straightforward, so we have chosen not to reproduce it here.

\begin{theo}
Suppose that $\mathbb{P}^{(N)}$ is the law of a finite state stationary (real) Markov chain with transition matrix $$Q^{(N)} = \Id + N^{-\gamma}(Q-\Id),$$ where $Q$ is another irreducible transition matrix on the same state space. If $\mu$ is the invariant distribution of $Q$, then it also the invariant distribution of $Q^{(N)}$, and $$\mathbb{E}^{(N)}F_N(\beta,h) \stackrel{N\rightarrow +\infty}{\longrightarrow} F(\beta,h) = \mathbb{E}_{\mu}\left( F(h+\beta\omega_0)\right) $$
\end{theo}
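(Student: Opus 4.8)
The plan is to reproduce, for the finite state space $\Sigma\subset\mathbb{R}$, the two-stage argument used for the two-state chain: first prove the analogue of the constrained-model lemma, namely $\mathbb{E}^{(N)}F^c_N(\beta,h)\to\mathbb{E}_\mu(F(h+\beta\omega_0))$ for the model constrained to visit the endpoints $L_1,L_2,\dots$ of the maximal strips on which $\omega$ is constant, and then transfer the conclusion to $F_N$ by the decoupling estimate. The second step is insensitive to the number of states and can be reused verbatim: one still has $Z_{N,\beta,h}\ge Z^c_{N,\beta,h}$ and $Z_{N,\beta,h}\le(1+CN^{2+\alpha+\varsigma})^{B_N}Z^c_{N,\beta,h}$, and since $\mathbb{E}^{(N)}B_N=N^{1-\gamma}$ the multiplicative defect contributes only $O(N^{-\gamma}\log N)$ to the averaged free energy, hence vanishes. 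So essentially all of the work lies in the constrained lemma.

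For the constrained model the Markov property still yields the factorization~(\ref{dec}): on the $i$-th strip $\omega$ takes the single value $s_i:=\omega_{L_{i-1}+1}$, so the corresponding factor is the \emph{homogeneous} constrained partition function $Z^c_{l_i,\,h+\beta s_i}$. Applying the three homogeneous estimates recalled above to each factor, according to whether $h+\beta s_i$ is positive, negative or zero, I would obtain
\[
 \frac1N\log Z^c_{N,\beta,h}=\frac1N\sum_{i=1}^{B_N}F(h+\beta s_i)\,l_i+R_N,
\]
where $R_N$ gathers the multiplicative constants $c(\cdot),c_2(\cdot)$ and the polynomial corrections $l_i^{-\theta}$ produced by the delocalized and critical strips. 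Because $\Sigma$ is finite there are only finitely many reward values $h+\beta s$, hence finitely many such constants, all uniformly bounded away from $0$; each strip then contributes at most $O(\log N)$ to $N|R_N|$, so $|R_N|\le CB_N\log N/N$. Using $F(h+\beta s)=0$ exactly when $h+\beta s\le0$, the main term equals $\frac1N\sum_{n=1}^{L_{B_N}}F(h+\beta\omega_n)$, since $F(h+\beta\cdot)$ is constant along each strip.

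It remains to pass to the limit in expectation, and here the $N$-dependence of the law $\mathbb{P}^{(N)}$ turns out to be harmless: since $\mu$ is invariant for every $Q^{(N)}$ the chain is stationary and each $\omega_n$ has one-dimensional marginal $\mu$, so $\mathbb{E}^{(N)}(F(h+\beta\omega_n))=\mathbb{E}_\mu(F(h+\beta\omega_0))$ for every $n$. Consequently $\mathbb{E}^{(N)}\big(\frac1N\sum_{n=1}^N F(h+\beta\omega_n)\big)=\mathbb{E}_\mu(F(h+\beta\omega_0))$ exactly, with no recourse to a pathwise ergodic theorem. The difference between this full sum and the truncated sum $\sum_{n=1}^{L_{B_N}}$ is the contribution of the incomplete final strip, bounded by $\max_{s\in\Sigma}F(h+\beta s)\cdot\mathbb{E}^{(N)}(N-L_{B_N})/N=O(N^{\gamma-1})$, which vanishes because the straddling strip has expected length $O(N^\gamma)$ and $\gamma<1$; together with $\mathbb{E}^{(N)}|R_N|=O(N^{-\gamma}\log N)$ this gives the constrained lemma, and then the decoupling estimate finishes the proof.

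I expect the only genuinely delicate point to be this uniform control of the error terms in expectation across the now several reward signs: one must check that the constants $c(\cdot),c_2(\cdot)$ and the exponent $\theta$ can be chosen uniformly over the finitely many values $h+\beta s$, that localized and delocalized strips may coexist in a single configuration without spoiling the bound, and that the boundary (incomplete and initial) strips are negligible. The averaging itself, which in an i.i.d.\ or genuinely random setting would demand an ergodic argument, is here reduced to the trivial identity above by stationarity of the one-dimensional marginal, which is precisely why the generalization from two states is routine.
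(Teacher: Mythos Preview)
Your proposal is correct and is precisely the ``straightforward generalization'' the paper alludes to but does not write out: the paper gives no proof of this theorem, stating only that the two-state argument carries over directly, and you have supplied exactly that extension --- the same constrained model, the same strip factorization~(\ref{dec}), the same homogeneous estimates applied strip by strip, and the same decoupling bound to pass from $Z^c$ to $Z$. The only cosmetic slip is that $\mathbb{E}^{(N)}B_N$ is not exactly $N^{1-\gamma}$ in the multi-state case but rather $c\,N^{1-\gamma}$ with $c=\sum_{x}\mu(x)(1-Q(x,x))$, which of course changes nothing.
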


Suppose that the states of $\omega$ are $x_1<x_2<\ldots<x_n$. What the last result tells us is that for $\beta>0$ fixed, there is a phase transition (in the sense that the free energy is not analytic) at the points $-\beta x_n < -\beta x_{n-1} < \ldots < -\beta x_1$. If $h\leq -\beta x_n$ then $F(\beta,h)=0$ and if $-\beta x_i < h \leq -\beta x_{i-1}$ then $F(\beta,h) = \mu_n F(h+\beta x_n) + \ldots + \mu_{i+1}F(h+\beta x_{i+1}) + \mu_i F(h+\beta x_i)$. Therefore, one obtains a phase diagram with a multistep depinning transition.

\section*{Acknowledgements}
The author is grateful to Hubert Lacoin for a discussion on the second model.

\bibliographystyle{spmpsci}      
\bibliography{references}   
\end{document}